\documentclass{article}
\usepackage{geometry}[margin=1in]
\usepackage{graphicx} 
\usepackage{amssymb, amsmath, color, amsfonts, amsthm,verbatim,enumerate}
\usepackage{graphicx, graphics, epsf, epsfig}

\newcommand{\kissat}{{\scshape Kissat}}
\newcommand{\nauty}{{\scshape Nauty}}
\newtheorem{theorem}{Theorem}

\newtheorem{lemma}[theorem]{Lemma}

\newtheorem{proposition}[theorem]{Proposition}

\newcounter{encoding}
\newtheorem{encoding}[encoding]{Encoding}

\newcommand{\R}{\mathcal{R}}
\title{The exact value of the Ramsey number $R(K_4-e,K_7)$}
\author{William J. Wesley}
\date{\today}

\begin{document}

\maketitle

\section{Introduction}


The \emph{Ramsey number} $R(G_1,G_2)$ is the smallest $n$ such that every graph on $n$ vertices contains a copy of $G_1$ or a copy of $G_2$ in its complement. The most studied cases are when $G_1 = K_s, G_2 = K_t$, and these are called the \emph{classical} Ramsey numbers and  denoted $R(s,t)$. It is extraordinarily difficult to compute the classical Ramsey numbers exactly. The last exact value $R(4,5) = 25$ was found over thirty years ago by McKay and Radziszowski \cite{R45McKayRadziszowksi}. Values as small as $R(3,10),\ R(4,6)$, and $R(5,5)$ remain unknown,  though recent progress has improved the upper bound on $R(5,5)$ from 48 to 46 \cite{R55Le46}. 

While it is difficult to make progress on small Ramsey number bounds for $G_1 = K_s, G_2 = K_t$, there is a wealth of other choices for $G_1$ and $G_2$. In some cases exact formulas are known or conjectured, but many Ramsey numbers---both small values and asymptotics---have yet to be determined. We refer the reader to the most recent edition of Radziszowski's survey for more details \cite{RamseySurvey}.  

We will focus on Ramsey numbers involving complete graphs $K_n$ and complete graphs missing exactly one edge, which we denote $K_n -e$ or $J_n$. Some authors also use the notation $K_n^-$ or $K_{n-1/2}$ for these graphs. These ``nearly classical" Ramsey numbers $R(J_m,J_n)$ and $R(J_m,K_n)$ are somewhat more well-behaved than their classical counterparts, yet are still elusive and worthy of study in their own right. The table of known values for these numbers (see \cite{RamseySurvey}, Table IIIa) has expanded in recent years. Two impressive results yielding large exact values are $R(J_5,J_6) = 37$ and $R(J_5,J_7) = 65$ \cite{BlockCircRamseyGoedVanOver,LidickyPfenderSDPRamsey}. The lower bounds come from special \emph{block circulant} and \emph{strongly regular graphs}, and the upper bounds are obtained by flag algebra tools. Other recent work has produced the exact values $R(J_5,K_5) = 30$ and $R(K_4,J_6) = 30$ by sharpening the upper bounds \cite{Angeltveit_RJ5K5_eq30, JamesKahanRauer_RK4J6_eq30}. 

The smallest unsolved case is $R(J_4,K_7)$. The lower bound $R(J_4,K_7) \ge 28$ follows from the fact that the complement of the strongly regular 27-vertex Schl\"afli graph contains no $J_4$ and no independent set of size 7 (see Figure \ref{FigSchlafli}). In fact, this graph is also the unique lower bound witness for the statement $R(J_4,J_7) = 28$, which was proved by McNamara and Radziszowski in 1991 
\cite{McNamaraRadzJ4J6_J4J7}. However, there are at least 786098 lower bound witness graphs for $R(J_4,K_7)$.
To our knowledge, the first nontrivial upper bound $R(J_4,K_7) \le 31$ was shown by Boza in \cite{BozaJ4K7_LE31} in 2010, and this was later improved to 30 by Boza and Portillo in \cite{BozaPortilloJ4K7_LE30} (see also previous versions of \cite{RamseySurvey}). About a decade later, Lidick\'y and Pfender slashed the upper bound to 29 \cite{LidickyPfenderSDPRamsey}, leaving a gap of 1 between the lower and upper bounds. This paper establishes the exact value for $R(J_4,K_7)$.  

\begin{theorem} \label{ThmMain}
    $R(J_4,K_7) = 28$.
\end{theorem}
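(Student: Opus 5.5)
The lower bound $R(J_4,K_7)\ge 28$ is already in hand: the complement of the Schl\"afli graph is a 27-vertex graph with no $J_4$ and no independent set of size 7. So it remains to show that every graph on 28 vertices contains a $J_4$ or an independent $7$-set. Equivalently, there is no $(J_4,K_7;28)$-graph. We know from Lidick\'y and Pfender that $R(J_4,K_7)\le 29$, so no $(J_4,K_7;29)$-graph exists; we must rule out $n=28$.

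The plan is a computer-assisted gluing argument, in the style of McKay--Radziszowski. Let $G$ be a hypothetical $(J_4,K_7;28)$-graph and $v$ a vertex.
\begin{itemize}
\item Since $G$ has no $J_4$, the neighborhood $N(v)$ induces a graph with maximum degree at most $1$, i.e.\ a matching plus isolated vertices. Its independence number is at least $\lceil d(v)/2\rceil$ and must be at most $6$, so $d(v)\le 12$.
\item The non-neighborhood $H=G-N[v]$ is a $(J_4,K_6)$-graph on $27-d(v)$ vertices. Using $R(J_4,K_6)=21$, we get $27-d(v)\le 20$, so $d(v)\ge 7$.
\end{itemize}
Hence $7\le \delta\le\Delta\le 12$.

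Edge counting should tighten this. Each edge lies in at most one triangle, so triangle and degree constraints apply. We also count independent sets: $\alpha(G)\le 6$ means every $6$-set (or the non-neighborhood structure) is restricted. This should push most vertices to a narrow degree range, probably $d(v)\in\{9,10,11,12\}$ (the Schl\"afli complement is $10$-regular). The exact range is to be determined by these counts.

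The core step is then, for each admissible degree $d$:
\begin{enumerate}
\item Enumerate, up to isomorphism, all $(J_4,K_6)$-graphs on $27-d$ vertices. These are the candidate non-neighborhoods $H$. For $d$ near $12$ this means $15$--$16$ vertices, a manageable count. For small $d$ the $H$ graphs are near-critical for $R(J_4,K_6)$ and few in number.
\item Fix the neighborhood structure as a matching $M$ on $d$ vertices with $\alpha\le 6$, which forces at least $d-6$ matching edges.
\item Glue by deciding the bipartite edges between $N(v)$ and $H$.
\end{enumerate}
The constraints on the gluing are:
\begin{itemize}
\item A vertex $u\in N(v)$ has neighborhood in $H$ that is independent in $H$ apart from a possible single edge, coming from the no-$J_4$ condition together with the structure induced through $v$ and $u$'s matching partner.
\item For every independent set $I\subseteq N(v)$, the common non-neighborhood of $I$ in $H$ has no independent set of size $7-|I|$.
\end{itemize}
I would encode each gluing instance as a SAT formula: one Boolean variable per cross edge, clauses forbidding every $J_4$ and every independent $7$-set, and symmetry breaking (e.g.\ lexicographic ordering) over the automorphisms of $M$ and $H$. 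Each instance would then be solved with \kissat, with isomorph rejection of the inputs via \nauty.

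I expect the main obstacle to be the size of this enumeration. The number of $(J_4,K_6)$-graphs on about $15$--$17$ vertices may be very large, and each SAT instance may be hard. To mitigate this, I would first choose $v$ of maximum degree, so that $H$ is as small as possible and the neighborhood is most constrained. If needed, I would also use a second vertex to split into cases according to pairs, e.g.\ gluing on an edge $uv$ with $|N(u)\cap N(v)|\le 1$. Alternatively, I would feed flag-algebra-style degree restrictions from the $n=29$ argument as extra cardinality constraints in the SAT encoding. Showing that all such instances are unsatisfiable yields $R(J_4,K_7)\le 28$, completing Theorem~\ref{ThmMain}.
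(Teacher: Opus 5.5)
\textbf{Verdict: genuine gap.} Your degree bounds $7\le d(v)\le 12$ are correct, but the central step of your plan cannot be carried out as stated. It needs, for every admissible $d$, a complete census of $\R(J_4,K_6;27-d)$, that is, of $(J_4,K_6)$-graphs on $15$ to $20$ vertices. Only $n=20$ ($24976$ graphs) and $n=19$ ($6817238$ graphs) are available. There is no census for $n\le 18$, and the counts grow explosively as $n$ drops below the critical value. For comparison, $|\R(J_4,K_5;n)|$ climbs from $13$ at $n=15$ to $180813$ at $n=12$.

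So your claim that $15$--$16$ vertices is ``a manageable count'' is backwards. Choosing $v$ of \emph{maximum} degree makes it worse: it makes $H$ as small as possible, so the candidate list is as large as possible. The only global information you gain is an upper bound on the other degrees, which is a weak constraint. Your guess that counting narrows degrees to $\{9,\dots,12\}$ is unsupported, and if true it would put you squarely in the infeasible cases.

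The missing idea is to take $v$ of \emph{minimum} degree $\delta$ and split on $\delta$. Then the cases with small non-neighbourhoods come paired with the strong constraint $\deg\ge\delta$ at every vertex, which lets you avoid enumerating those non-neighbourhoods.
\begin{itemize}
\item For $\delta\ge 11$, a short count suffices. Let $xy$ be an edge of the matching induced on $N(v)$; one exists since $\alpha(G_v^+)\le 6$. Neither $x$ nor $y$ has another neighbour in $N(v)$, so each has at least $9$ neighbours in the at most $16$-vertex $G_v^-$. They therefore share a neighbour there, which gives a $J_4$.
\item For $\delta=10$, either $G_v^-$ contains a triangle (fix those three edges in the SAT instance), or it is one of the $7$ graphs in $\R(K_3,K_6;17)$.
\item For $\delta=9$, first show there is a degree-$9$ vertex $u$ and a vertex $w$ with $k=\deg_{G_u^-}w\ge 7$. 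Then $(G_u^-)_w^-\in\R(J_4,K_5;17-k)$ with $17-k\le 10$, which is a small, known census.
\item Only $\delta=7,8$ use the full censuses $\R(J_4,K_6;20)$ and $\R(J_4,K_6;19)$, with cardinality clauses $\deg\ge\delta$ added.
\end{itemize}

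Separately, your gluing constraint for $u\in N(v)$ is wrong. The no-$J_4$ condition forces $N(u)\cap V(H)$ to induce a \emph{matching} in $H$, since each edge $ua$ lies in at most one triangle, and that matching may have several edges. Imposed as a hard constraint, your ``independent apart from one edge'' version would prune the search unsoundly. The plain $J_4$-clauses in your SAT encoding are fine.
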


It is somewhat surprising that there is no disparity between $R(J_4,J_7)$ and $R(J_4,K_7)$. In fact, we have $R(J_4,J_n) < R(J_4,K_n)$ for $n \le 6$ and $n = 8$. It may also be true that $R(J_5,J_7) = R(J_5,K_7) = 65$. Another recent work of Zhang and Chen gives an interesting connection between $R(J_4,K_n)$ and Ramsey numbers for \emph{wheel graphs} $W_n = K_1 + C_{n-1}$, where $+$ denotes the graph join. Namely, $R(W_7,K_n) \ge 2R(J_4,K_n)-1$, with equality holding for $n \le 4$ and conjectured for all $n$ \cite{ZhangChen_Ramsey_center}. 

\begin{figure} \label{FigSchlafli}
\begin{center}
    \includegraphics[scale = 0.35]{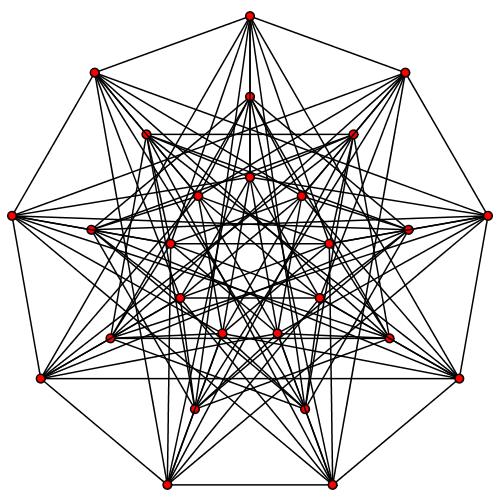}
    \end{center}
    \caption{The complement of the Schl\"afli graph gives the lower bound $R(J_4,K_7) \ge 28$. Credit to House of Graphs for the vertex coordinates of this symmetric drawing \cite{HouseOfGraphs}.}
\end{figure}

Our proof of Theorem \ref{ThmMain} uses a mixture of pen-and-paper and computational methods. In particular, we make liberal use of Boolean satisfiability (SAT) solvers, which have become a staple in computational combinatorics in recent years. SAT solvers are useful for Ramsey-type problems both in finding interesting graphs or colorings and proving that no such graph exists \cite{WJWBooks, WJW_Rado_ISSAC, SchurFive, AhmedZamanBright, BMRS_3ColorSchur, WJW_SmallMulticolor, VDW26, VDW34, R334Equals30}. We also use the \emph{SAT modulo symmetries} (SMS) framework for enumerating graphs and generating certain constraints \cite{SMS}. We give a brief overview of SAT solving in Section \ref{SectionSAT}. The spirit of our proof is similar to those for $R(K_4,J_6)$ and $R(J_5,K_5)$ \cite{Angeltveit_RJ5K5_eq30, JamesKahanRauer_RK4J6_eq30} in that we use known computations and enumerations for smaller Ramsey numbers to rule out the possibility $R(J_4,K_7) > 28$. We describe these prior computations and collect other basic results in Section \ref{SectionNotation}. The main proof of Theorem \ref{ThmMain} is in Section \ref{SectionMainProof}.

Finally, we remark that while a nontrivial amount of computation time was expended for this proof, the total time can be measured in CPU days rather than years. All results were obtained on a personal laptop with an Intel i7-13650HX (2.60 GHz) processor.  

\section{SAT and SMS}\label{SectionSAT}

In this section we give an overview on the SAT solving methods used in this paper. A more comprehensive resource on SAT can be found in \cite{SATHandbook_1stEd}. 


The following encoding is the basis for our SAT computations.
\begin{encoding}\label{EncodingRamseySAT}
	The Ramsey number $R(G_1,G_2)$ is at most $n$ if the formula $F_n(G_1,G_2)$ is unsatisfiable, where 
	$$F_n(G_1,G_2) :=\Bigl(\bigwedge_{H\le K_n, H\cong G_1} \Bigl(\bigvee_{e\in E(H)} \bar x_e\Bigr) \Bigr)\wedge \Bigl(\bigwedge_{H\le K_n, H\cong G_2} \Bigl(\bigvee_{e\in E(H)} x_e\Bigr)\Bigr).$$
	Moreover, if $F_n(G_1,G_2)$ is satisfiable, then $R(G_1,G_2) \ge n+1$.
\end{encoding}
However, it is not enough to simply run a solver with input $F_{28}(J_4,K_7)$. This formula alone is too difficult to be solved by current ``out of the box" solvers in a reasonable amount of time. Indeed, the formula $F_{28}(J_4,K_7)$ is quite large, and it consists of 378 variables and 1306890 clauses. We will need to insert additional clauses to enforce \emph{symmetry breaking} and \emph{cardinality} constraints.  

Symmetry breaking is a powerful---and often necessary---tool in SAT solving. Combinatorial SAT instances often have many symmetries. For instance, the formula $F_n(G_1,G_2)$ is invariant under any permutation of the vertices of $K_n$ (more precisely, a permutation of $V(K_n)$ induces a permutation of the variables $x_e$). A symmetry of a formula $\phi$ induces a group action on the set of truth assignments, and a \emph{symmetry breaking predicate} evaluates to true on at least one element from each orbit. Adding symmetry breaking predicates does not affect the satisfiability of $\phi$, but prevents a solver from considering too many assignments from a single orbit. A more precise description of general symmetry breaking methods for SAT formulas and the associated software {\scshape Shatter} can be found in \cite{Shatter}. 

In this note we require only the following. If $\sigma$ is a permutation of variables that leaves a formula $\phi$ invariant, then we can add constraints $x_1\cdots x_n \le \sigma(x_1)\cdots \sigma(x_n)$, where $x_1\cdots x_n$ and  $\sigma(x_1)\cdots \sigma(x_n)$ are viewed as binary strings with the convention false = 0, true = 1. This ensures that any satisfying assignment will be lexicographically minimal. We can encode a constraint of the form $ x_1\cdots x_n \le y_1 \cdots y_n$ with the following clauses:

\begin{align*}
    &e_0, \\
    & \bar{x}_1 \vee y_1, \\ 
    &\bar{e}_{i-1} \vee \bar{x}_i \vee e_i, \\
    &\bar{e}_{i-1} \vee y_i \vee e_i, \\
    &\bar{e}_{i-1} \vee \bar{x}_{i} \vee \bar{x}_{i+1} \vee y_{i+1},\\
    &\bar{e}_{i-1} \vee y_{i} \vee \bar{x}_{i+1} \vee y_{i+1}, \hspace{10em} \forall i, 1\le i \le n-1. \\
\end{align*}

Note that the variable $e_i$ is set to true if the first $i$ bits of $x$ and $y$ are equal. 

SAT modulo symmetries (SMS) is a framework and software package for generating all nonisomorphic graphs that satisfy a given formula. It uses symmetry breaking methods similar to the lexicographic clauses above, but also some more sophisticated dynamic methods \cite{SMS}. We will use SMS to generate some subgraphs that would necessarily be contained in a $R(J_4,K_7;28)$ graph. 

The formula $F_{28}(J_4,K_7)$ makes no additional assumptions on what a satisfying assignment (graph) looks like. But as we will see in Section \ref{SectionNotation}, there are several degree bounds a hypothetical 28-vertex graph containing no $J_4$ and no $\overline{K_7}$ must satisfy, and we will enforce these with cardinality constraints. Given a set $S$ of literals, let $L_k(S)$ be a formula that is true if and only if at most $k$ of the literals in $S$ are set to true. There are many different ways to encode this constraint (see \cite{WynnSATcardinality} for a survey). All cardinality constraints used in this paper were generated with the SMS software package \cite{SMS}.

\section{Notation and basic results}\label{SectionNotation}

We say that a graph $G$ is a \emph{(Ramsey) $(G_1,G_2;n)$ graph} if it is a graph on $n$ vertices that does not contain a copy of $G_1$ and $\overline{G}$ does not contain a copy of $G_2$. The set of all Ramsey $(G_1,G_2;n)$ graphs is denoted $\R(G_1,G_2;n)$. 

For a given vertex $v$, we let $G_v^+$ be the graph induced on the (open) neighborhood $N(v)$ of $v$, and we let $G_v^-$ be the graph induced on the set $V(G)\setminus (N(v) \cup \{v\})$, the set of non-neighbors of $v$ that are not equal to $v$. For two sets of vertices $A$ and $B$, we let $e(A,B)$ denote the number of edges with one endpoint in $A$ and the other in $B$. We also abuse notation and for graphs $G$ and $H$ we write $e(G,H)$ for $e(V(G),V(H))$. We let $\delta(G)$ and $\Delta(G)$ denote the minimum and maximum degree of $G$. 

We will use the following result, which follows immediately from the definition of Ramsey numbers, frequently.  
\begin{proposition}\label{PropDecomposition}
    Let $G \in \R(J_s,K_t;n)$. If $v \in V(G)$ has degree $d$, then we have 
    \begin{enumerate}[(i)]
    \item $G_v^+ \in \R(J_{s-1},K_t; d)$, 
    \item $G_v^- \in \R(J_s,K_{t-1}; n - d-1)$.
    \end{enumerate}
\end{proposition}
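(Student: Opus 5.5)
The plan is to verify the two defining conditions directly for each induced subgraph. Recall that $G \in \R(J_s,K_t;n)$ means $G$ has $n$ vertices, contains no copy of $J_s$, and $\overline{G}$ contains no copy of $K_t$, i.e.\ $G$ has no independent set of size $t$. Both properties pass to induced subgraphs: a copy of $J_s$ or an independent set in an induced subgraph is also one in $G$. So the only real work is to show that $G_v^+$ avoids the smaller forbidden graph $J_{s-1}$, and that $G_v^-$ has no independent set of size $t-1$. The vertex counts are immediate: $|N(v)| = d$, and $|V(G)\setminus(N(v)\cup\{v\})| = n-d-1$.

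For (i), I will show that $G_v^+$ contains no $J_{s-1}$, and I would argue by contradiction. Suppose $H \subseteq G_v^+$ is a copy of $J_{s-1}$, i.e.\ $K_{s-1}$ minus one edge, on vertex set $S \subseteq N(v)$. Then $v$ is adjacent to every vertex of $S$. Hence the subgraph of $G$ on $S\cup\{v\}$ contains all edges of $H$ together with all edges from $v$ to $S$. This is $K_s$ minus the single missing edge of $H$, i.e.\ a copy of $J_s$ in $G$, which is a contradiction. For the independence condition, an independent set of size $t$ in $G_v^+$ would be one in $G$, so none exists. Therefore $G_v^+ \in \R(J_{s-1},K_t;d)$.

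For (ii), $G_v^-$ contains no $J_s$ because it is an induced subgraph of $G$. Now suppose $I$ is an independent set of size $t-1$ in $G_v^-$. Every vertex of $I$ is a non-neighbor of $v$ distinct from $v$, so $I\cup\{v\}$ is an independent set of size $t$ in $G$. That means $\overline{G}$ contains a copy of $K_t$, which is a contradiction. Therefore $G_v^- \in \R(J_s,K_{t-1};n-d-1)$.

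There is no genuine obstacle here. The only point needing care is (i): the join of $v$ with $J_{s-1}$ is exactly $J_s$, since adding a vertex adjacent to everything keeps exactly one edge missing. This is what makes the statement use $J_{s-1}$ rather than $K_{s-1}$ in the neighborhood.
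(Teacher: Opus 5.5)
Your proof is correct and is the same direct verification from the definitions that the paper has in mind; the paper gives no proof, only the remark that the proposition ``follows immediately from the definition of Ramsey numbers.'' Your argument supplies the two needed observations: joining $v$ to a copy of $J_{s-1}$ inside $N(v)$ yields a copy of $J_s$, and adding $v$ to an independent set of its non-neighbors yields a larger independent set.
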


From this it is straightforward to obtain degree bounds on any Ramsey $(J_s,K_t;n)$ graph if the smaller Ramsey numbers are known. 

\begin{proposition}{\label{PropGeneralDegreeBound}}
    Let $G \in \R(J_s,K_t;n)$. Then for all $v \in V(G)$, we have $$n-1 - R(J_s,K_{t-1}) < \deg v < R(J_{s-1},K_t).$$ 
\end{proposition}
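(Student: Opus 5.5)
The bound follows directly from Proposition \ref{PropDecomposition}; the plan is simply to apply it to $G_v^+$ and $G_v^-$ in turn. Fix $G \in \R(J_s,K_t;n)$ and a vertex $v$ of degree $d$, and treat the two inequalities separately.

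\textbf{Upper bound.} By Proposition \ref{PropDecomposition}(i), $G_v^+$ is a $(J_{s-1},K_t;d)$ graph. Such a graph has no $J_{s-1}$, and its complement has no $K_t$. By the definition of the Ramsey number, every graph on $R(J_{s-1},K_t)$ or more vertices fails one of these two conditions. Hence $d \le R(J_{s-1},K_t)-1$, that is, $\deg v < R(J_{s-1},K_t)$.

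\textbf{Lower bound.} By Proposition \ref{PropDecomposition}(ii), $G_v^-$ is a $(J_s,K_{t-1};n-d-1)$ graph. The same reasoning gives $n-d-1 \le R(J_s,K_{t-1})-1$. Rearranging yields $d \ge n - R(J_s,K_{t-1})$, which is exactly $n-1-R(J_s,K_{t-1}) < \deg v$.

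\textbf{Degenerate cases.} The only point needing care is when a neighborhood or non-neighborhood is empty. A graph on $0$ vertices vacuously avoids both subgraphs, and $0 < R(\cdot,\cdot)$ always holds, so the inequalities remain valid. One could also note that Proposition \ref{PropDecomposition} itself rests on two observations: a $J_{s-1}$ inside $N(v)$ together with $v$ would form a $J_s$, and an independent $(t-1)$-set among the non-neighbors together with $v$ would form an independent $t$-set. These are what justify applying the Ramsey definition to $G_v^+$ and $G_v^-$. There is no real obstacle here.
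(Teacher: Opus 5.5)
Your proof is correct and takes the same approach as the paper. The paper gives no separate argument; it says the bound is immediate from Proposition~\ref{PropDecomposition}, by applying (i) to $G_v^+$ on $d$ vertices and (ii) to $G_v^-$ on $n-d-1$ vertices, which is exactly what you do.
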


With Propositions \ref{PropDecomposition} and \ref{PropGeneralDegreeBound} in mind, it is extremely helpful to have enumerations of the graphs in $\R(J_{s-1},K_{t};d)$ and $\R(J_s,K_{t-1};d)$ for all values of $d$, or when not possible, the values of $d$ that are allowed in Proposition \ref{PropGeneralDegreeBound}. This is a thematic idea in many Ramsey number upper bound proofs. We will use the following enumerations in our main proof: 

\begin{lemma}\label{LemmaEnumerations}
    The following hold: 

\begin{center}
 \normalfont
     \begin{tabular}[t]{|c|c|}
     \hline
         $n$ & $|\R(J_4,K_6;n)|$ \\
         \hline
         19 & 6817238 \\
         20 & 24976 \\
         21 & 0 \\
         \hline
     \end{tabular}\quad
     \begin{tabular}[t]{|c|c|}
     \hline
         $n$ & $|\R(J_4,K_5;n)|$ \\
         \hline
         5 & 21 \\
         6 & 63 \\
         7 & 210 \\
         8 & 897 \\
         9 & 4463 \\
         10 & 23577 \\ 
         11 & 97796 \\
         12 & 180813 \\
         13 & 46510\\
         14 & 856\\
         15 & 13\\
         16 & 0\\
         \hline
     \end{tabular} \quad
      \begin{tabular}[t]{|c|c|}
      \hline
         $n$ & $|\R(K_3,K_6;n)|$ \\
         \hline
         17 & 7 \\
         18 & 0 \\
         \hline
     \end{tabular} \quad
     \end{center}
\end{lemma}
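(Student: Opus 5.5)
This lemma is a statement of known enumerations, so I would establish it by citation where possible and otherwise by exhaustive computer generation with independent cross-checks.

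\textbf{The $\R(K_3,K_6;n)$ table.} These counts come from the classical enumeration of Ramsey $(3,6)$ graphs. It is known that $R(3,6)=18$ and that there are exactly $7$ critical graphs on $17$ vertices. I would cite this and, as a check, regenerate the set with a standard vertex-extension procedure.

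\textbf{Generation method for the $J_4$ tables.} For $\R(J_4,K_5;n)$ and $\R(J_4,K_6;n)$ I would use one-vertex extension.

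\begin{itemize}
\item Every graph in $\R(J_4,K_t;n)$ restricts, after deleting any vertex, to a graph in $\R(J_4,K_t;n-1)$.
\item So, starting from $\R(J_4,K_t;n-1)$, I would adjoin a new vertex in all possible ways and reject any extension that creates a $J_4$ or an independent $t$-set.
\item Among the survivors I would keep one representative per isomorphism class, found by canonical labelling with \nauty.
\end{itemize}

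Two constraints make the extension step cheap:
\begin{itemize}
\item The new vertex's neighbourhood must induce a $J_3$-free graph, that is, a disjoint union of edges and isolated vertices, since $J_4=K_1+J_3$.
\item Its non-neighbourhood must contain no independent $(t-1)$-set.
\end{itemize}
This gives all rows of the $\R(J_4,K_5;n)$ table. The terminating zero at $n=16$ is consistent with the known value $R(J_4,K_5)=16$.

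\textbf{The $\R(J_4,K_6;n)$ table.} Here levelwise extension from small $n$ is expensive, because the middle levels are huge. I would instead exploit Propositions \ref{PropDecomposition} and \ref{PropGeneralDegreeBound}.

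\begin{itemize}
\item For $G\in\R(J_4,K_6;n)$ with $n\ge 19$, every vertex $v$ satisfies $G_v^+\in\R(J_3,K_6;d)$ and $G_v^-\in\R(J_4,K_5;n-d-1)$.
\item Since $R(J_4,K_5)=16$, we get $d\ge n-16$. Since $R(J_3,K_6)$ is small, $d$ is bounded above.
\item I would choose a vertex of maximum (or minimum) degree and glue a graph from the already-enumerated $\R(J_4,K_5;n-d-1)$ table to a $J_3$-free neighbourhood graph. The cross edges are then found by a SAT/SMS search with the forbidden-subgraph clauses of Encoding \ref{EncodingRamseySAT}, or by a dedicated gluing program.
\item Finally I would deduplicate with \nauty.
\end{itemize}
The zero at $n=21$ reflects $R(J_4,K_6)=21$, which is already known. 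I would confirm it both by citation and by the emptiness of every gluing at $n=21$.

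\textbf{Main obstacle.} The bottleneck is completeness and correctness of the $n=19$ count, $6817238$ graphs. The gluing must range over every admissible degree $d$, and canonical deduplication must be exact. I would validate it by redoing the computation through a different route, extending $\R(J_4,K_6;19)$ downward by vertex deletion and upward to $20$ and $21$, and checking that the $20$-level count $24976$ and the empty $21$-level agree. I would also compare against previously published counts where they exist.
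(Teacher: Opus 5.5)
Your proposal is correct. For the $\R(J_4,K_5;n)$ table it follows the paper's route exactly: cite the census and regenerate it by one-vertex extension, with \nauty{} rejecting isomorphic copies.

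For the other two tables the routes differ. The paper treats them purely as known results: Shetler--Wurtz--Radziszowski for $\R(J_4,K_6;n)$, and Radziszowski--Kreher and K\'ery for $\R(K_3,K_6;n)$. It regenerates those graphs only because it needs the graphs themselves, and it does so by running SMS directly on the Ramsey formula. SMS outputs one canonical representative per isomorphism class. For $\R(K_3,K_6;17)$ you would use vertex extension where the paper uses SMS; this is a minor difference.

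For $\R(J_4,K_6;n)$ you propose a one-vertex gluing over $d\in[n-16,10]$. You take a matching with at least $d-5$ edges as $G_v^+$ and a graph from the $\R(J_4,K_5;n-d-1)$ table as $G_v^-$, enumerate all cross-edge completions, and then deduplicate with \nauty{}. This is complete, because every vertex of every such $G$ gives a decomposition of this form.

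Your gluing route has three advantages:
\begin{itemize}
\item it reuses tables you need anyway;
\item it splits into many small parallel instances;
\item it is independent of SMS's symmetry breaking, so it would give a genuine cross-check of the cited census.
\end{itemize}
The cost is an all-solutions search in every instance, plus a large pool of isomorphic labelled duplicates to deduplicate at $n=19$. SMS avoids that pool by pruning non-canonical graphs during the search.

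One caution about your validation step. Extending the $19$-vertex list upward only tests those $19$-vertex graphs that actually extend. Each $20$-vertex graph has at most $20$ vertex-deleted subgraphs, so at most $20\cdot 24976<5\cdot 10^5$ of the $6817238$ graphs extend. An omission among the rest would go unnoticed. Deleting vertices downward also says nothing about the $19$-level count. The real check is agreement with the published census, or with a second independent enumeration such as the paper's SMS run.
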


All of these are previously known results. The enumerations of $\R(J_4,K_6;n)$ were done in \cite{ShetlerWurtzRadz} and those for $\R(J_4,K_5;n)$ were done in \cite{Angeltveit_RJ5K5_eq30}. The value $|\R(K_3,K_6;17)|  = 7$ was computed in \cite{RadzKreher_R3k_enums}, and $|\R(K_3,K_6;18)|  = 0$, which is equivalent to the statement $R(K_3,K_6) \le 18$, was originally shown in \cite{R36Kery}. In fact, the values for $|\R(J_4,K_5;n)|$ and $|\R(K_3,K_6;n)|$ are known for all values of $n$, but this is not the case for $|\R(J_4,K_6;n)|$ 

We needed to get our hands on all of these graphs themselves, not merely the counts. For the $R(J_4,K_6;n)$ and $R(K_3,K_6;17)$ graphs we used SAT modulo symmetries (SMS), and for the $R(J_4,K_5;n)$ graphs we used a custom ``vertex extension" program (add one vertex at a time, in all possible ways, then filter out isomorphic copies with the software \nauty\  \cite{Nauty}) to generate all of them. This entire process requires only a few hours of computation time.

We will also use the structure of $R(J_3,K_t;n)$ graphs repeatedly. The following lemma is a folklore or sufficiently ``obvious" result, but we include it and a proof for completeness. 

\begin{lemma}\label{LemMatching}
    Let $G$ be a Ramsey $(J_3,K_t;n)$ graph. Then $G$ is a matching consisting of at least $n-t+1$ edges. Moreover, we have $R(J_3,K_t) = 2t-1$, and in particular, $n < 2t-1$.
\end{lemma}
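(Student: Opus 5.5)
The plan has three steps: identify the structure of $G$, count what an independent set can avoid, and then pin down $R(J_3,K_t)$ with a matching construction.

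\emph{Structure.} Note that $J_3 = K_3 - e$ is the path on three vertices, $P_3$. A graph contains no copy of $P_3$ (as a not necessarily induced subgraph) exactly when every vertex has degree at most $1$. Indeed, a vertex $v$ with two neighbors $u,w$ gives the path $u v w$, and conversely a $P_3$ has a middle vertex of degree at least $2$. So a $J_3$-free graph is a matching together with isolated vertices.

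\emph{Counting.} Suppose $G$ has $m$ edges. Choosing one endpoint from each edge, together with all $n-2m$ isolated vertices, gives an independent set of size $m + (n-2m) = n - m$. This set is a clique in $\overline{G}$. Since $\overline{G}$ contains no $K_t$, we need $n - m \le t-1$, that is, $m \ge n-t+1$. This gives the first claim.

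\emph{The value of $R(J_3,K_t)$.} For the inequality $n < 2t-1$, combine $m \ge n-t+1$ with the trivial bound $2m \le n$. This gives $2(n-t+1) \le n$, that is, $n \le 2t-2$. Hence no Ramsey $(J_3,K_t;n)$ graph exists for $n \ge 2t-1$, so $R(J_3,K_t) \le 2t-1$.

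For the matching lower bound, take $G$ to be a perfect matching on $2t-2$ vertices, i.e.\ $t-1$ disjoint edges. It has no $J_3$. Any independent set contains at most one vertex from each edge, so its size is at most $t-1$, and thus $\overline{G}$ has no $K_t$. Hence $G \in \R(J_3,K_t;2t-2)$ and $R(J_3,K_t) = 2t-1$.

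No step here is a real obstacle. The only care needed is to make explicit that containment of $J_3$ means a (not necessarily induced) subgraph, which is the convention of Encoding \ref{EncodingRamseySAT}; this is what makes "$J_3$-free" equivalent to "maximum degree at most $1$".
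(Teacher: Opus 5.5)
Your proof is correct and follows essentially the same route as the paper: you identify $J_3$-free graphs as matchings plus isolated vertices, bound the number of edges via the independent set formed by one endpoint per edge together with the isolated vertices, and use the $(t-1)$-edge matching for the lower bound. The only cosmetic difference is that you obtain $n\le 2t-2$ by combining $m\ge n-t+1$ with $2m\le n$, whereas the paper takes $n=2t-1$, gets $m\ge t$, and picks one endpoint from each edge to produce an independent set of size $t$; these amount to the same inequality.
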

\begin{proof}
    Notice that $J_3$ is a length 2 path, so each component of a $J_3$-free graph is either a single vertex or a single edge. Let $e$ be the number of edges of $G$. Since $G$ has no independent set of size $t$, we see that $G$ has at most $t-1$ components. Then we must have $e+(n-2e) \le t-1$, so rearranging gives $e \ge n -t+1$ as desired. 

    The lower bound $R(J_3,K_t) \ge 2t-1$ follows from the fact that a matching consisting of $t-1$ edges has no $J_3$ and no $\overline{K_t}$. If there were a $(J_3,K_t;2t-1)$ graph, then from above we have $e \ge t$. However, taking one endpoint from each edge gives an independent set of size $t$, a contradiction.  
\end{proof}

The results of this section are summarized in Figure \ref{FigGeneralStructure}. 
\begin{figure} 

\hspace{5.5em}
    \includegraphics[scale = 0.3, width = \textwidth]{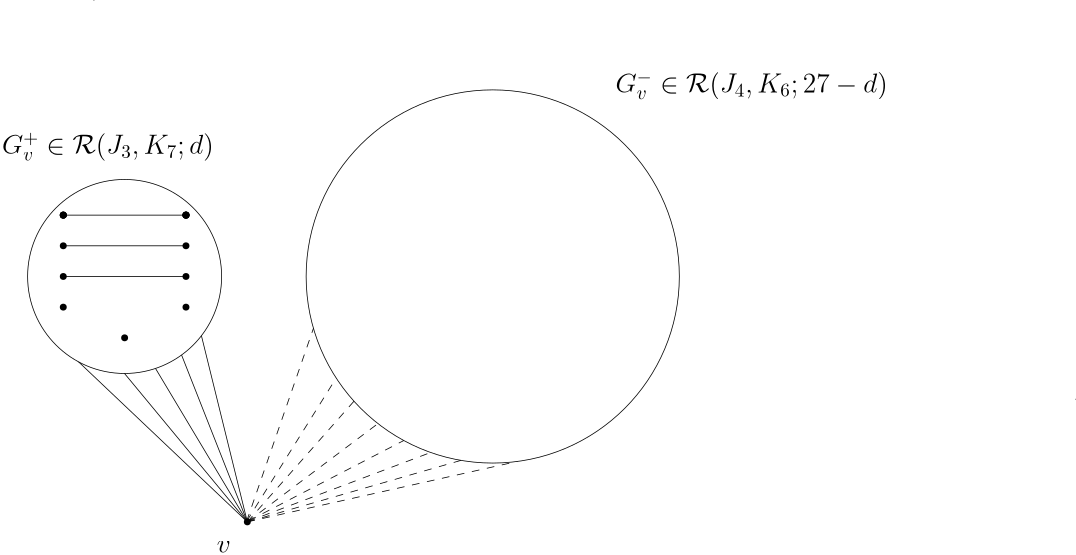}
    \caption{Structure of a hypothetical $R(J_4,K_7;28)$ graph with a degree $d$ vertex $v$. Missing edges are denoted with a dashed line.}
    \label{FigGeneralStructure}
\end{figure}

\section{Proof of Theorem \ref{ThmMain}}\label{SectionMainProof}

The proof of Theorem \ref{ThmMain} is organized by cases based on the minimum degree $\delta$ of a hypothetical Ramsey $(J_4,K_7;28)$ graph $G$. From Proposition \ref{PropGeneralDegreeBound} and using the facts $R(J_3,K_7) = 13$ and $R(J_4,K_6) = 21$, we have that $ 7 \le\deg v \le 12$ for all $v\in V(G)$. The cases for the larger values of $\delta$ are resolved through purely combinatorial arguments, but as $\delta$ gets smaller, we rely on a computational assault. The most complex case was $\delta = 9$, where we split into further subcases and use a combination of both methods. 

\subsection{$\delta = 11,12$}
The first cases we consider are where all vertices have high degree. The following lemma shows this is impossible.
\begin{lemma}\label{LemmaMinDegLe10}
   Suppose $G$ is a Ramsey $(J_4,K_7;28)$ graph. Then $\delta(G) \le 10$.
\end{lemma}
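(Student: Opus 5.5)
The plan is a double count of the edges between the neighborhood and the non-neighborhood of a single vertex. We argue by contradiction: suppose $\delta(G)\ge 11$. Since every degree lies in $[7,12]$, every vertex then has degree $11$ or $12$. Fix any vertex $v$ and let $d=\deg v\in\{11,12\}$.

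\textbf{Step 1: the neighborhood is a matching.} By Proposition~\ref{PropDecomposition}(i), $G_v^+\in\R(J_3,K_7;d)$. Lemma~\ref{LemMatching} then says $G_v^+$ is a matching with at least $d-6$ edges. For $d=12$ this forces a perfect matching with $6$ edges. For $d=11$ it forces $5$ edges plus one isolated vertex. In particular, every $u\in N(v)$ has at most one neighbor inside $N(v)$.

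\textbf{Step 2: lower bound on $e(N(v),V(G_v^-))$.} Take $u\in N(v)$. Its neighbors lie among $v$, at most one vertex of $N(v)$, and the vertices of $G_v^-$. Since $\deg u\ge 11$, $u$ has at least $9$ neighbors in $G_v^-$. Hence
$$e(N(v),V(G_v^-))\ge 9d.$$

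\textbf{Step 3: upper bound on the same quantity.} Take $w\in V(G_v^-)$. Suppose $w$ were adjacent to both endpoints of a matching edge $u_1u_2$ of $G_v^+$. Then $v,u_1,u_2,w$ would span all edges except $vw$, which is a copy of $J_4$. So $w$ has at most one neighbor in each matching edge, plus possibly the isolated vertex when $d=11$. This gives at most $6$ neighbors of $w$ in $N(v)$ in either case. Since $|V(G_v^-)|=27-d$,
$$e(N(v),V(G_v^-))\le 6(27-d).$$

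\textbf{Step 4: contradiction.} For $d=12$ we would need $108\le 90$. For $d=11$ we would need $99\le 96$. Both fail, so $\delta(G)\le 10$.

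There is no real obstacle. The only delicate point is confirming that the matching structure from Lemma~\ref{LemMatching} is forced exactly: at most one neighbor inside $N(v)$ for each $u$, and the correct count of matching edges plus the isolated vertex when $d=11$. The $d=11$ case has the thinnest margin ($99$ versus $96$), so the per-vertex bounds there must be stated carefully.
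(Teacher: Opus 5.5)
Your proof is correct and uses the same facts as the paper's: each endpoint of a matching edge of $G_v^+$ has at least $9$ neighbors in $G_v^-$, and no vertex of $G_v^-$ can be adjacent to both endpoints without creating a $J_4$. The paper applies this to a single matching edge $\{x,y\}$: since $9+9>16\ge |V(G_v^-)|$, the two endpoints share a neighbor in $G_v^-$. That is the per-edge version of your double count, and it avoids both the split into $d=11,12$ and the thin $99$ versus $96$ margin.
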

\begin{proof}
    Suppose $\delta(G) \ge 11$. Let $v \in V(G)$. Then $G_v^+$ is a matching consisting of at least 5 edges, each of which forms a triangle with $v$. Let $\{x,y\}$ be one of these edges. Since $G$ is $J_4$-free, observe that $N(x) \cap V(G_v^+) = \{y\}$ and $N(y) \cap V(G_v^+) = \{x\}$. So $x$ and $y$ both have at least 9 neighbors in $G_v^-$. Since $|V(G_v^-)| \le 16$, we see that $x$ and $y$ have a common neighbor in $G_v^-$, contradicting the fact that $G$ is $J_4$-free.
\end{proof}

\subsection{$\delta = 10$}
We now move to the $\delta = 10$ case, which requires computer assistance. We expended a moderate amount of effort trying to ``decomputerize" this section of the proof, but were unsuccessful. This can likely be done, but would require some tedious case analysis; instead, we opted to delegate to a SAT solver. 
\begin{lemma} \label{LemmaMinDegLe9}
   Suppose $G$ is a Ramsey $(J_4,K_7;28)$ graph. Then $\delta(G) \le 9$.
\end{lemma}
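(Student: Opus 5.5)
We argue by contradiction and assume $G$ is a Ramsey $(J_4,K_7;28)$ graph with $\delta(G)=10$; by Lemma \ref{LemmaMinDegLe10} this is the only case to rule out. Fix a vertex $v$ of degree exactly $10$. By Proposition \ref{PropDecomposition}, $G_v^+\in\R(J_3,K_7;10)$, so Lemma \ref{LemMatching} says $G_v^+$ is a matching with at least $4$ edges. Also $G_v^-\in\R(J_4,K_6;17)$.

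The first step is to pin down as much structure as possible by hand.

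\begin{itemize}
\item Let $\{x,y\}$ be an edge of $G_v^+$. As in the proof of Lemma \ref{LemmaMinDegLe10}, $J_4$-freeness forces $N(x)\cap V(G_v^+)=\{y\}$, and $x,y$ have no common neighbour in $G_v^-$.
\item Each of $x,y$ has degree at least $10$, so each has at least $8$ neighbours in $G_v^-$. These neighbourhoods are disjoint subsets of a $17$-vertex set, so $16$ of the $17$ vertices of $G_v^-$ are used.
\item Each such neighbourhood $N(x)\cap V(G_v^-)$ lies inside $N(x)$, so it must itself induce a graph in $\R(J_3,K_7)$, i.e.\ a matching.
\item An unmatched vertex $w$ of $G_v^+$ has $N(w)\cap V(G_v^+)=\emptyset$, so all of its at least $9$ other neighbours lie in $G_v^-$.
\item Each vertex of $G_v^-$ has at most one neighbour in each triangle-edge of $G_v^+$; otherwise it forms a $J_4$ with $v$'s triangle. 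Counting degrees into $G_v^+$ therefore gives strong edge-count constraints between $G_v^+$ and $G_v^-$.
\end{itemize}

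I would write these as cardinality constraints: $\deg u\in[10,12]$ for all $u$, and the per-edge incidence bounds above.

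The second step is the computational one, and I expect it to be the main obstacle. Since $\R(J_4,K_6;17)$ is not among the enumerations of Lemma \ref{LemmaEnumerations}, and likely too large to enumerate directly, I would not enumerate $G_v^-$. Instead I would use the degree-10 vertex $v$ as an anchor. Fix $v$ as vertex $1$, fix the matching structure of $G_v^+$ (only the cases of $4$ or $5$ matching edges, up to relabelling), and build $F_{28}(J_4,K_7)$ together with:
\begin{itemize}
\item the cardinality constraints $L_{12}$ and ``at least $10$'' on every vertex's incident edge variables;
\item the fixed adjacency of $v$;
\item lexicographic symmetry-breaking clauses for the permutations that preserve this fixed structure, namely permuting matching edges, swapping endpoints within an edge, and permuting vertices of $G_v^-$.
\end{itemize}
For the $G_v^-$ vertices I would add the SMS-style minimality predicate, or simple lex constraints between adjacent rows of the adjacency matrix restricted to $G_v^-$. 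I would run \kissat\ on each case and expect UNSAT.

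If this direct approach is too slow, the fallback is to split further on the partition of $V(G_v^-)$ induced by the neighbourhoods of the matched vertices. The disjointness argument nearly partitions the $17$ vertices into blocks of size about $8$, which are themselves matchings. I would enumerate the possibilities for one pair $x,y$ and its two $8$- or $9$-vertex matching neighbourhoods up to isomorphism, then solve each resulting SAT instance. Unsatisfiability in all cases proves $\delta(G)\le 9$.
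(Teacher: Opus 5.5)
Your hand-derived facts are all correct: $G_v^+$ is a matching, the two endpoints of a matching edge have disjoint neighbourhoods in $G_v^-$, and $\deg u\le 12$. Your framework is also the paper's: fix a degree-$10$ vertex $v$, impose $\delta\ge 10$ with cardinality clauses, fix the matching in $G_v^+$, and hand the rest to \kissat. The gap is that the step carrying the whole proof is ``run \kissat\ and expect UNSAT'' on an instance where $G_v^-$ is left free apart from generic lex/SMS symmetry breaking over the $17$ vertices. You give no reason this instance is tractable. The facts you add by hand are consequences of $F_{28}(J_4,K_7)$ together with the degree bounds. They may help propagation, but apart from the trivial $4$-versus-$5$ split they do not cut the search into small cases. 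The paper notes that $F_{28}(J_4,K_7)$ by itself is out of reach, and your fallback (enumerating the neighbourhoods of one matched pair) is equally untested. As written, this is a plausible computational plan rather than a proof.

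The missing idea is a cheap structural split on $G_v^-$. Since $|V(G_v^-)|=17$, $G_v^-$ has no independent $6$-set, and $R(K_3,K_6)=18$, there are only two possibilities. Either $G_v^-$ is one of the $7$ graphs in $\R(K_3,K_6;17)$ from Lemma~\ref{LemmaEnumerations}, or it contains a triangle, which by symmetry can be placed on three fixed labels.

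A triangle $\{a,b,c\}\subseteq V(G_v^-)$ is very restrictive. No other vertex can be adjacent to two of $a,b,c$, since that creates a $J_4$. So $N(a)\setminus\{b,c\}$, $N(b)\setminus\{a,c\}$ and $N(c)\setminus\{a,b\}$ are pairwise disjoint. Each has size at least $8$, and all lie among the $24$ vertices other than $v,a,b,c$. Hence $a,b,c$ all have degree exactly $10$, and each of those $24$ vertices is adjacent to exactly one of $a,b,c$. This explains why fixing the triangle is so effective: the paper's triangle instance finishes in about two minutes, and each triangle-free instance in under six seconds.

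If you keep your own route, note one further point. Combining SMS minimality on $G_v^-$ with hand-written lex constraints on the rows of $G_v^+$ is sound only if all of them are lex-leader constraints for one common variable order.
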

\begin{proof}
    From Lemma \ref{LemmaMinDegLe10}, we can assume that $G$ has a degree 10 vertex $v$, and so $|V(G_v^-)| = 17$. Since $G_v^-$ must not contain an independent set of size 6 and $R(K_3,K_6) = 18$, it follows that either $G_v^-$ contains a triangle or $G_v^-$ is one of the 7 graphs in $\R(K_3,K_6;17)$. 

    We used the SAT solver \kissat\ to show that each case is impossible. Each case consists of the clauses in $F_{28}(J_4,K_7)$ together with the following additional clauses, letting $v = 0$ and $K_n$ have vertex set $\{0,\dots, 27\}$:

    \begin{itemize}
        \item Cardinality clauses guaranteeing each vertex has degree at least 10. These were generated with the SMS package. 
        \item The clauses $x_{v,i}$ for $1 \le i \le 10$ and $\bar{x}_{v,i}$ for $11 \le i \le 27$. 

        \item The clauses $x_{1,2}$, $x_{3,4}$, $x_{5,6}$, and $x_{7,8}$. This can be done since by Lemma \ref{LemMatching}, we know $G_v^+$ is a matching consisting of at least 4 edges. 
        
        \item For the 7 Ramsey $(K_3,K_6;17)$ graphs $H$, the literals for the edges in $G_v^-$ were set to true or false so that $G_v^- \cong H$. 
        \item For the case where $G_v^-$ contains a triangle, the clauses $x_{11,12}, x_{11,13}$, and $x_{12,13}$. 
    \end{itemize}

The cases for the 7 Ramsey $(K_3,K_6;17)$ graphs each finished in less than 6 seconds. The case where $G_v^-$ contains a triangle finished in approximately 2 minutes. 
\end{proof}

\subsection{$\delta = 9$}

The $\delta = 9$ case ended up being the least straightforward, though ultimately it did not require the most computation time. If $\delta = 9$, then there is a vertex $v$ with $G_v^- \in \R(J_4,K_6;18)$. However, there is no published census (enumeration) of $\R(J_4,K_6;18)$. Since $|\R(J_4,K_6;19)| = 6817238$, we suspected that it would be difficult simply to compute $\R(J_4,K_6;18)$, let alone rule out each of these graphs as a possibility for $G_v^-$. Instead, we opted to decompose $G_v^- =: H$ further (see Figure \ref{FigIteratedStructure}) and find a large degree vertex $w$ in $H$. Then Proposition \ref{PropDecomposition} implies that $H_w \in \R(J_3,K_6;k)$ and $H_w^- \in R(J_4,K_5;17-k)$ for some $k$. From Lemma \ref{LemmaEnumerations}, we see that the largest sizes of $\R(J_4,K_5;m)$ occur for $m = 11, 12, 13$, or when $k = 4,5,6$. Lemma \ref{LemmaDeg9DeltaGEQ7} shows that we can avoid computing with these cases by showing $H$ has a vertex with degree at least 7 in $H$. 

\begin{figure}
    \centering
    \includegraphics[scale = 0.45]{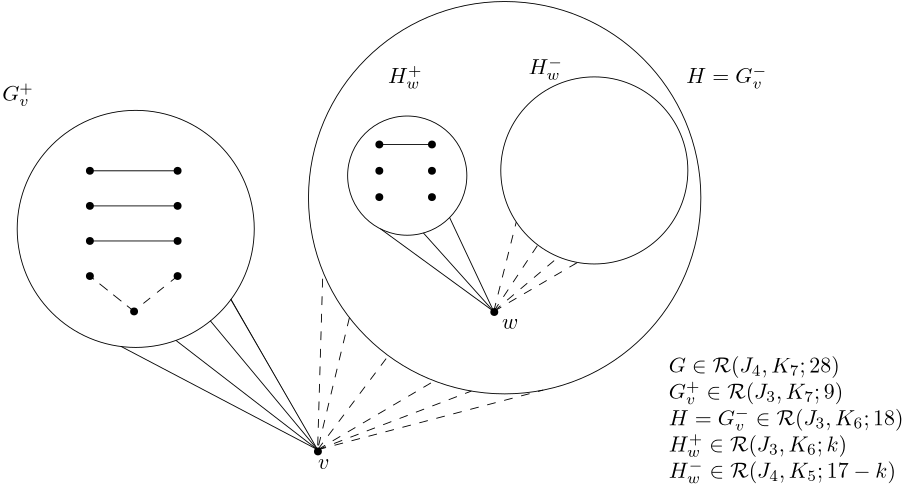}
    \caption{Structure of a hypothetical $R(J_4,K_7;28)$ graph with degree 9 vertex $v$.}
    \label{FigIteratedStructure}
\end{figure}





\begin{lemma}

\label{LemmaDeg9DeltaGEQ7}
    Suppose $G$ is a Ramsey $(J_4,K_7;28)$ graph with $\delta(G) = 9$. Then there is some degree 9 vertex $u$ with $\Delta(G_u^-) \ge 7$.  
\end{lemma}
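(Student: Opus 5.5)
The plan is to argue by contradiction. Assume $\delta(G)=9$ and that every degree $9$ vertex $u$ satisfies $\Delta(G_u^-)\le 6$, and derive a contradiction by double counting edges between the neighborhood and the non-neighborhood of a degree $9$ vertex.

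\textbf{Setup.} Fix a degree $9$ vertex $v$ and write $A=N(v)$ and $H=G_v^-$, so $|A|=9$ and $|V(H)|=18$. By Proposition \ref{PropDecomposition}, $G_v^+\in\R(J_3,K_7;9)$, so by Lemma \ref{LemMatching} the graph $G[A]$ is a matching with at least $3$ edges. Also $H\in\R(J_4,K_6;18)$, and by assumption $\Delta(H)\le 6$.

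\textbf{Lower bound on $e(A,V(H))$.} Every $w\in V(H)$ has $\deg_G w\ge 9$ and $\deg_H w\le 6$, and $w$ is not adjacent to $v$. Hence $w$ has at least $3$ neighbors in $A$, and
$$e(A,V(H)) \;\ge\; \sum_{w\in V(H)}\bigl(\deg_G w-\deg_H w\bigr)\;\ge\; 54.$$
Moreover, for each $w\in V(H)$, Proposition \ref{PropDecomposition} gives $H_w^-\in\R(J_4,K_5;17-\deg_H w)$. By Lemma \ref{LemmaEnumerations} this forces $2\le \deg_H w\le 6$, so every $w$ has between $3$ and $7$ neighbors in $A$ (using $\deg_G w\le 12$).

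\textbf{Upper bound on $e(A,V(H))$.} This side must use the $J_4$-freeness around the matched pairs of $A$. If $\{x,y\}$ is an edge of $G[A]$, then $vxy$ is a triangle. Since $G$ is $J_4$-free, as in the proof of Lemma \ref{LemmaMinDegLe10}, the sets $N_H(x)$ and $N_H(y)$ are disjoint. In addition, $v$ is the only common neighbor of $x$ and $y$.

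Next, a vertex $w\in V(H)$ with $t\ge 3$ neighbors in $A$ gives a subgraph $G[N(w)]$ that must be a matching. Its neighbors in $A$ therefore include at most one matched pair of $A$; the remaining ones are pairwise nonadjacent in $G$, and each of them is adjacent to $v$. If $w$ were adjacent to both ends $x,y$ of a matched pair, then $\{v,x,y,w\}$ would span a $J_4$. So $w$ meets each matched pair in at most one vertex.

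Combining these facts, together with $\alpha(G[N(w)])\le 6$, I would bound how the $\ge 54$ edges distribute over the $9$ vertices of $A$. The key step is to apply the hypothesis again to vertices of $A$ that have degree exactly $9$. Such a vertex $x$ also has $\Delta(G_x^-)\le 6$, and its non-neighborhood contains most of $H$. Averaging, some $x\in A$ has at least $6$ neighbors in $H$. I then want either a vertex of degree $\ge 10$ that overloads a matched pair, or a degree $9$ vertex $x$ whose $G_x^-$ inherits a vertex of $H$-degree $\ge 7$.

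\textbf{Main obstacle and fallback.} I expect the hardest step to be closing this counting argument: the crude bound $e(A,V(H))\le 9\cdot 11=99$ is far from $54$. The refinement must come from the disjointness of neighborhoods of matched pairs and from applying the hypothesis at many degree $9$ vertices at once. If the hand argument stalls, I would delegate it to \kissat\ in the style of Lemma \ref{LemmaMinDegLe9}. The encoding would be $F_{28}(J_4,K_7)$, together with cardinality constraints $\deg\ge 9$, $\deg v=9$, the matching edges $x_{1,2},x_{3,4},x_{5,6}$ in $G_v^+$, and for every vertex $u$ the conditional cardinality constraint: if $\deg u=9$, then each non-neighbor of $u$ has at most $6$ neighbors among the non-neighbors of $u$. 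Unsatisfiability of this formula would prove the lemma.
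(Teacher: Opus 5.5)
Your proposal has a genuine gap: the step that should produce the contradiction is never carried out, and the count of $e(A,V(H))$ you set up has no tension in it. Each $z\in A$ is adjacent to $v$ and to at most one other vertex of $A$. So, unconditionally, $e(A,V(H))=\sum_{z\in A}\deg z-9-2m\ge 81-9-8=64$, where $m\le 4$ is the number of edges of $G[A]$. The bound $e(A,V(H))\ge 54$ that you extract from $\Delta(H)\le 6$ is therefore weaker than what holds anyway. No refinement of the upper bound, via disjointness of $N_H(x),N_H(y)$ or otherwise, can fall below it. Using $\deg w-6$ in place of $3$ only gives $\sum_{w\in V(H)}\deg w\le\sum_{z\in A}\deg z+99-2m$, which is consistent with all the degree bounds.

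Your closing step, a degree $9$ vertex $x$ whose $G_x^-$ ``inherits a vertex of $H$-degree $\ge 7$,'' cannot happen as written, since $\Delta(H)\le 6$, and you give no concrete mechanism. A smaller slip: $\deg_G w\le 12$ and $\deg_H w\ge 2$ give at most $10$ neighbors in $A$, not $7$. The correct cap is $9-m\le 6$, from your own observation that $w$ meets each matched pair at most once.

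The fallback does not close the gap either. Saying that unsatisfiability of a formula would prove the lemma is not a proof. Your formula also fixes only $\deg v=9$ and three edges, so it is essentially the whole $\delta=9$ problem plus a heavy conditional constraint. The paper notes that even bare $F_{28}(J_4,K_7)$ is beyond current solvers, and nothing suggests yours would finish.

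The missing idea is a local configuration that creates new degree-$9$ centers at which the hypothesis can be reapplied. Because $|V(H)|=18=R(K_3,K_6)$ and $\alpha(H)\le 5$, $H$ contains a triangle $abc$. A common neighbor of two of $a,b,c$ would create a $J_4$, so $N(a)\cap A$, $N(b)\cap A$, $N(c)\cap A$ are pairwise disjoint, with total size at most $9$. If $\Delta(H)\le 6$, each has size at least $3$, hence exactly $3$. This forces $\deg a=\deg b=\deg c=9$, and these three independent triples partition $A$.

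If three matching edges of $G[A]$ join the $a$-triple to the $b$-triple, then $a$ together with the $b$- and $c$-triples is an independent $7$-set. Otherwise, a matching edge $x_1y_1$ gives a triangle $vx_1y_1$ inside $G_c^-$. Rerunning the triangle argument at the degree-$9$ vertices $a,b,c$ then pins down many further edges. Only on the two resulting, heavily constrained configurations does the paper call a SAT solver, and each takes one to two hours. The paper itself does not finish this lemma by hand, so a counting route without this structural reduction is unlikely to work.
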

\begin{proof}
    Let $v$ be a degree 9 vertex. By Proposition \ref{PropDecomposition}, we know that $G_v^-$ cannot have an independent set of size 6. Since $|V(G_v^-)| = 18$ and $R(K_3,K_6) = 18$, we can assume that $G_v^-$ contains a triangle with vertices $\{a,b,c\}$. If $e(G_v^+, \{x\}) < 3$ for some $x \in \{a,b,c\}$, then we have $\deg_{G_v^-} x \ge 7$ and we are done by taking $u := v$. If $e(G_v^+, \{a\}) + e(G_v^+, \{b\}) + e(G_v^+, \{c\}) >9$, then two of $a,b,c$ share a common neighbor $w$ in $G_v^+$, but this is impossible since $G$ is $J_4$-free. Therefore we can assume that $ e(G_v^+, \{x\}) = 3$ for $x \in \{a,b,c\}$. 

    Let $x_i, y_i, z_i$, $i = 1,2,3$, be the respective neighbors of $a,b,c$ in $G_v^+$. Observe that since $G$ is $J_4$-free, the $x_i$ form an independent set, and likewise for the $y_i$ and $z_i$. Suppose that $\deg_{G_v^-} x <7$ for $x \in \{a,b,c\}$. Since $\delta(G) = 9$, we have that each of $a,b,c$ has degree 9 and has exactly 4 neighbors in $V(G_v^-)\setminus \{a,b,c\}$. Let $q_i, r_i, s_i$, $i = 1,2,3,4$, be the neighbors of $a$, $b$, and $c$, respectively, in $G_v^-$.

    Since $G$ has no independent set of size 7, by Lemma \ref{LemMatching} it follows that $G_v^+$ contains at least 3 edges. Without loss of generality, let $a$ be the vertex in $\{a,b,c\}$ that is adjacent to the largest number of endpoints of edges in $G_v^+$. In particular, it must be adjacent to at least 2. We can assume one of the following cases: 
    \begin{enumerate}[(1)]
        \item  $x_1y_1, x_2y_2, x_3y_3$ are edges.
        \item $x_1y_1, x_2y_2, x_3z_1$ are edges.
        \item 
    $x_1y_1, x_2z_1, y_2z_2$ are edges.
    \end{enumerate}
    
    Case 1 is impossible since $\{a,y_1,y_2,y_3,z_1,z_2,z_3\}$ is a size 7 independent set. For Cases 2 and 3, observe that $vx_1y_1$ is a triangle in $G_c^-$, so by a similar argument as above, either we may take $u:= c$, or each of $v$, $x_1$, and $y_1$ is adjacent to exactly 3 vertices in $G_c^+$. Observe that $V(G_c^+) = \{a,b,z_1,z_2,z_3,s_1,s_2,s_3,s_4\}$. We know that $v$ is adjacent to $z_1,z_2,z_3$, that $x_1$ is adjacent to $a$, and $y_1$ is adjacent to $b$. The remaining four vertices are $s_1,s_2,s_3,s_4$, and without loss of generality we may suppose that $s_1$ and $s_2$ are adjacent to $x_1$ and $s_3$ and $s_4$ are adjacent to $y_1$. 
    
    Similarly, in Case 2, using that $vx_3z_1$ is a triangle in $G_b^-$,  we can assume that $r_1$ and $r_2$ are adjacent to $x_3$ and $r_3$ and $r_4$ are adjacent to $z_1$. 

    In Case 3, we can assume $r_1$ and $r_2$ are adjacent to $x_2$ and $r_3$ and $r_4$ are adjacent to $z_1$. Moreover, $q_1$ and $q_2$ are adjacent to $y_2$ and $q_3$ and $q_4$ are adjacent to $z_2$. See Figure \ref{FigCase3} for a picture of these edges.

    For each of Case 2 and Case 3, we added clauses to the formula $F_{28}(J_4,K_7)$ to force the appropriate edges to be present. The solver {\scshape Kissat} determined that these formulas for Case 2 and Case 3 were unsatisfiable in approximately 2 hours and 1 hour, respectively.
\end{proof}

\begin{figure}
\begin{center}\includegraphics[scale = 0.4]{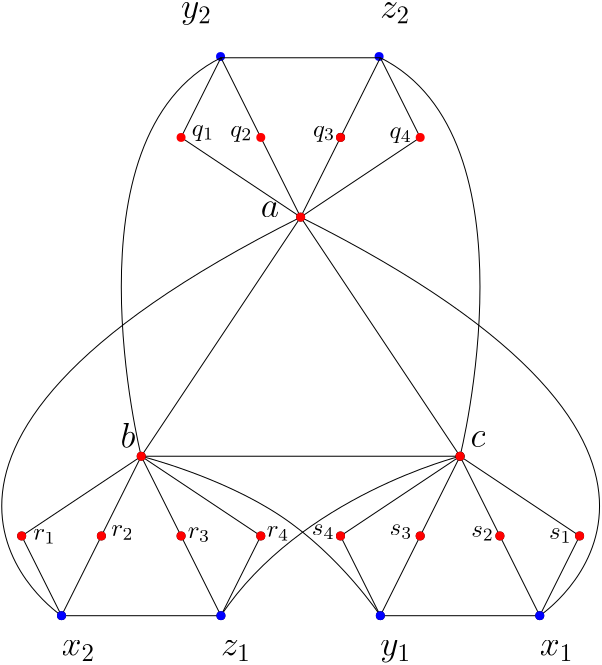}
    \end{center}
\caption{Some of the edges set in Case 3 of Lemma \ref{LemmaDeg9DeltaGEQ7}. Vertices colored blue are adjacent to $v$, and vertices colored red are nonadjacent to $v$.}
\label{FigCase3}
\end{figure}

We can now eliminate the case $\delta = 9$. 

\begin{lemma} \label{LemmaMinDegLe8}
    Suppose $G$ is a Ramsey $(J_4,K_7;28)$ graph. Then $\delta(G) \le 8$. 
\end{lemma}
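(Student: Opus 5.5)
By Lemma \ref{LemmaMinDegLe9} it suffices to rule out $\delta(G) = 9$. So I will assume $\delta(G)=9$, and Lemma \ref{LemmaDeg9DeltaGEQ7} then supplies a degree 9 vertex $v$ such that $H := G_v^-$ has a vertex $w$ with $\deg_H w = k \ge 7$. By Proposition \ref{PropDecomposition}, $H \in \R(J_4,K_6;18)$. Applying the proposition again inside $H$ gives $H_w^+ \in \R(J_3,K_6;k)$ and $H_w^- \in \R(J_4,K_5;17-k)$. Lemma \ref{LemMatching} forces $k \le 10$, since $R(J_3,K_6)=11$. Hence $k \in \{7,8,9,10\}$ and $|V(H_w^-)| \in \{10,9,8,7\}$, with respectively $23577$, $4463$, $897$ and $210$ possibilities by Lemma \ref{LemmaEnumerations}. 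Also, $H_w^+$ is a matching with at least $k-5$ edges; I can normalize it by fixing those edges on consecutive vertex pairs, as in the proof of Lemma \ref{LemmaMinDegLe9}.

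The plan is a SAT computation, one instance per graph $F \in \R(J_4,K_5;17-k)$ for each $k$. Label the vertices so that $v=0$, $N(v)=\{1,\dots,9\}$, $w=10$, $N_H(w)=\{11,\dots,10+k\}$, and the remaining $17-k$ vertices carry $F$. Each instance is $F_{28}(J_4,K_7)$ together with the following clauses:
\begin{itemize}
\item unit clauses fixing the adjacencies of $v$ and of $w$ as above;
\item unit clauses fixing the forced matching edges of $H_w^+$;
\item unit clauses fixing $G[\{18+k-?\}]$; more precisely, the induced graph on the last $17-k$ vertices is set equal to $F$;
\item cardinality constraints forcing every degree to lie in $[9,12]$, generated with SMS as in Lemma \ref{LemmaMinDegLe9}.
\end{itemize}

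Before splitting into all these instances, I would first cut down the list of graphs $F$ by cheap necessary conditions. Every vertex $y$ of $H_w^-$ satisfies $\deg_G y \ge 9$. It has at most $\deg_F y$ neighbours inside $H_w^-$, and it must avoid creating a $J_4$ with $w$'s structure. Its available outside neighbours are at most $9$ in $G_v^+$ and at most $k$ in $H_w^+$, restricted further by $J_4$-freeness: a vertex can be adjacent to at most one endpoint per triangle-edge in a matching neighbourhood. Using a counting argument like the one in Lemma \ref{LemmaMinDegLe10}, this may prune many $F$.

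I would also apply Lemma \ref{LemmaDeg9DeltaGEQ7} more strongly. Choosing $w$ to maximize $\deg_H w$ lets me add constraints that no vertex of $H$ has $H$-degree exceeding $k$. This makes the $k=7$ case carry the extra restriction $\Delta(H)=7$. Symmetry breaking on the automorphisms of $F$ and on the permutations of $N(v)$ can be added via the lexicographic clauses of Section \ref{SectionSAT}.

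The main obstacle is the $k=7$ case, with its $23577$ graphs on $10$ vertices. It has many instances, and each leaves a large uncommitted region: the $9$ neighbours of $v$ and the $7$ neighbours of $w$. If the solve times are too long, I would fall back to splitting further. One option is to fix more of the matching structure of $G_v^+$, which has at least $3$ edges. Another is to fix $e(G_v^+,\{w\})$, which is at most $3$ by the triangle argument in Lemma \ref{LemmaDeg9DeltaGEQ7}, and run these subcases separately.
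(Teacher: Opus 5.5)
Your plan is correct and is essentially the paper's proof. You reduce to $\delta(G)=9$, use Lemma~\ref{LemmaDeg9DeltaGEQ7} to get $w$ with $k=\deg_H w\in[7,10]$, and run one SAT instance per $Q\in\R(J_4,K_5;17-k)$ with $v$, $w$, the $H_w^+$ matching and $H_w^-\cong Q$ fixed. The paper additionally fixes the $G_v^+$ matching ($x_{1,2},x_{3,4},x_{5,6},\bar x_{7,9},\bar x_{8,9}$) from the start, and adds lexicographic symmetry breaking on those three edges only for the 243 hard $k=7$ instances. Your extra constraints $\Delta(H)\le k$ and $\deg\le 12$ are valid refinements that the paper does not use.

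One caveat applies only to your fallback. The bound $e(G_v^+,\{w\})\le 3$ does not follow from the triangle argument, which only bounds the sum over a triangle by $9$. Since $N(v)\cap N(w)$ is independent in the matching $G_w^+$ with $\alpha(G_w^+)\le 6$, you only get $e(G_v^+,\{w\})\le 6-e(H_w^+)$. That permits the value $4$ when $k=7$, so the split would need to include that case unless you exclude it separately.
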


\begin{proof}
From Lemma \ref{LemmaMinDegLe9}, we can assume $\delta(G) = 9$. Lemma \ref{LemmaDeg9DeltaGEQ7} says that we can find a degree 9 vertex $v$ and a $w$ with $\deg_{G_v^-}w \ge 7$. Set $H = G_v^-$. Then by Proposition \ref{PropDecomposition} and Lemma \ref{LemMatching}, we have that $H_w^+ \in \R(J_3,K_6;k)$ and $H_w^- \in \R(J_4,K_5;17-k)$ for some $k \in [7,10]$. Let $K_{28}$ have vertex set $\{0,\dots,27\}$, and set $v = 0, w = 10$. For each $k \in [7,10]$ and each graph $Q \in \R(J_4,K_5;17-k)$, we constructed a formula $\phi_Q$ that consists of the clauses of $F_{28}(J_4,K_7)$ together with the following additional clauses:

\begin{itemize}
\item Cardinality clauses guaranteeing each vertex has degree at least 9. These were generated with the SMS package.
    \item $x_{0,i}$ for $1 \le i \le 9$, 
    \item $\bar{x}_{0,i}$ for $10 \le i \le 27$,
    \item $x_{10,10+i}$ for $1\le i \le k$,
    \item $\bar x_{10,10+i}$ for $k+1\le i \le 17$,
    \item The literals for the edges in $H_w^-$ were set to true or false so that $H_w^- \cong Q$.
    \item The clauses $x_{1,2}$, $x_{3,4},$ $x_{5,6}$, $\bar{x}_{7,9}$, and $\bar{x}_{8,9}$. We can do this since Lemma \ref{LemMatching} guarantees 3 edges in $G_v^+$. The two negative unit clauses force the last possible edge in $G_v^+$ to be $\{7,8\}$. 
    \item $x_{9+2i,10+2i}$ for $1\le i \le k-5$ since Lemma \ref{LemMatching} guarantees at least $k-5$ edges in $H_w^+$. 
\end{itemize}



We ran all of these formulas $\phi_Q$ in \kissat\ with a time limit of 5 minutes each. All were shown to be unsatisfiable except 243 formulas with $k=7$ and 6 formulas with $k=8$, which did not finish in time.

First, we ran the 6 hard $k=8$ formulas with an increased time limit, and these all finished in times ranging from 314 seconds (slightly over the previous time limit) to 12581 seconds (roughly 3.5 hours). We were concerned that the 243 hard $k=7$ formulas would take too much time, so at this stage we turned to additional symmetry breaking. 

Observe that we can permute the three edges in $G_v^+$ and swap the endpoints of any individual edge without affecting the satisfiability of $\phi_Q$.
Therefore we can add the following symmetry breaking clauses to $\phi_Q$:

\begin{itemize}
    \item $x_{1,10}\cdots x_{1,27} \le x_{2,10}\cdots x_{2,27}$,
   \item  $x_{3,10}\cdots x_{3,27} \le x_{4,10}\cdots x_{4,27} $, 
   \item  $x_{5,10}\cdots x_{5,27} \le x_{6,10}\cdots x_{6,27} $,
   \item $x_{1,10}\cdots x_{1,27} \le x_{3,10}\cdots x_{3,27}$, 
   \item $x_{3,10}\cdots x_{3,27} \le x_{5,10}\cdots x_{5,27}$. 
\end{itemize}

With these additional clauses added, the 243 hard instances took an average of 229 seconds to solve, with a minimum of 19.6 seconds and maximum of 3469 seconds. The total time was 55580 seconds, or about 15.4 hours. 

\end{proof}




\subsection{$\delta$ = 8}


If $G$ has a degree 8 vertex $v$, then we have $G_v^- \in\R(J_4,K_6;19)$. From Lemma \ref{LemmaEnumerations} and \cite{ShetlerWurtzRadz} we know that $|\R(J_4,K_6;19)| = 6817238$. A formula that sets the edges of $G_v^-$ to be a particular one of these Ramsey graphs can be shown to be UNSAT by {\kissat} in a few seconds. It would not be unreasonable to brute force all 6817238 cases in this way, especially with parallelization, but we wanted to get a modest improvement on the total computation time. 

First, we introduced symmetry breaking clauses as follows. Since $\delta = 8$, we have that $G_v^+$ contains at least two edges. Similarly to the $\delta = 9$ case, we can permute the two edges and swap the endpoints of each individual edge. Therefore we can assume the following: 

\begin{itemize}
    \item $x_{1,9}x_{1,10}\cdots x_{1,27} \le x_{2,9}x_{2,10}\cdots x_{2,27}$, 
   \item  $x_{3,9}x_{3,10}\cdots x_{3,27} \le x_{4,9}x_{4,10}\cdots x_{4,27} $,
   \item $x_{1,9}x_{1,10}\cdots x_{1,27} \le x_{3,9}x_{3,10}\cdots x_{3,27}. $ \\
\end{itemize}

We observed that many of the graphs in $\R(J_4,K_6;19)$ are similar to one another, and therefore should produce similar proofs of unsatisfiability. In other words, it is inefficient for the solver to redo similar computations (as well as preprocessing and other procedures) individually for over six million graphs. To circumvent this, we want to batch similar graphs together and have the solver rule them out in a single blow. For a family of graphs $\mathcal F$, we can add the following clauses: 

\begin{align*}
    &\bar s_H \vee x_e,  &&\forall H \in \mathcal F, \forall e \in E(H)\\
    &\bar s_H \vee \bar x_e,  &&\forall H \in \mathcal F, \forall e \in \binom{V(H)}{2} \setminus E(H)\\
    &\bigvee_{H \in \mathcal F} s_H.
\end{align*}

The variables $s_H$ are selectors that ``turn on" the assignment of the edges and non-edges for $H$. The clause $\bigvee_{H \in \mathcal F} s_H$ guarantees that one of the graphs $H$ is actually assigned. 

We ran 6817 batches with  1000 $(J_4,K_6;19)$ graphs at a time using this method (and one final batch with the remaining 238). The graphs were ordered lexicographically by their strings in the graph6 format (see \cite{McKayRamseyGraphs}). The solve times for the size 1000 batches ranged from 6.7 to 2503 seconds with a mean solve time of 43.5 seconds and total solve time of 296521 seconds, or about 82.4 hours. The final batch solved in 7.2 seconds. This ended up being one of the most computationally intensive parts of the proof. It could certainly be optimized further, but we thought that four days of time was reasonable. 

The previous discussion together with Lemma \ref{LemmaMinDegLe8} proves the following. 

\begin{lemma} \label{LemmaMinDegLe7}
    Suppose $G$ is a Ramsey $(J_4,K_7;28)$ graph. Then $\delta(G) \le 7$.
\end{lemma}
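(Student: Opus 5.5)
The plan is to argue by contradiction. By Lemma \ref{LemmaMinDegLe8} we may assume $\delta(G) = 8$, and we pick a degree 8 vertex $v$. By Proposition \ref{PropDecomposition}, $G_v^- \in \R(J_4,K_6;19)$ and $G_v^+ \in \R(J_3,K_7;8)$. Lemma \ref{LemMatching} then says $G_v^+$ is a matching with at least $8-7+1 = 2$ edges.

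We label $K_{28}$ by $\{0,\dots,27\}$ with $v=0$. The base formula for every case consists of:
\begin{itemize}
\item the clauses of $F_{28}(J_4,K_7)$;
\item cardinality clauses forcing every vertex to have degree at least $8$;
\item unit clauses $x_{0,i}$ for $1\le i\le 8$ and $\bar x_{0,i}$ for $9\le i\le 27$;
\item unit clauses $x_{1,2}$ and $x_{3,4}$, giving the two guaranteed matching edges in $G_v^+$.
\end{itemize}
Permuting these two edges, and swapping the endpoints of either edge, preserves satisfiability. We therefore also add the three lexicographic constraints on the rows $x_{i,9}\cdots x_{i,27}$ listed above, encoded with the $e_i$-chain clauses from Section \ref{SectionSAT}.

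The exhaustive step runs over the $6817238$ graphs of $\R(J_4,K_6;19)$ from Lemma \ref{LemmaEnumerations}. We sort them lexicographically by graph6 string, so that consecutive graphs tend to be similar, and split them into batches of $1000$. For each batch $\mathcal F$ we use the selector encoding: variables $s_H$, clauses tying $s_H$ to the full edge and non-edge pattern of $H$ on vertices $9,\dots,27$, and the clause $\bigvee_{H\in\mathcal F}s_H$. An unsatisfiable batch formula rules out every $H\in\mathcal F$ as $G_v^-$ at once. If every batch is UNSAT, no graph with $\delta(G)=8$ exists, which proves the lemma.

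The main obstacle is computational cost, since running one solver call per graph means millions of instances. Batching should amortize the solver's preprocessing and let it reuse similar refutations across similar graphs, but some batches may be much harder than others. For any batch that does not finish within a reasonable time limit, I would first split it into smaller sub-batches, and only fall back to running its graphs individually if that fails; a single fixed graph is solved in seconds. Correctness depends only on two facts: the enumeration of $\R(J_4,K_6;19)$ is complete, and the symmetry breaking is sound, which holds because the lex constraints only pick a representative of each orbit under a symmetry group that fixes all the other unit clauses.
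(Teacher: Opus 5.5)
Your proposal is correct and follows the paper's argument essentially step for step. You reduce to $\delta(G)=8$ via Lemma \ref{LemmaMinDegLe8}, break the symmetry of the two guaranteed matching edges in $G_v^+$ with the same three lexicographic constraints, and rule out all $6817238$ graphs of $\R(J_4,K_6;19)$ as $G_v^-$ using selector-encoded batches of $1000$ sorted by graph6 string. Your fallback of splitting hard batches turned out to be unnecessary in the paper, since every batch finished (the longest in about $2503$ seconds).
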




\subsection{$\delta = 7$}

This case ends up being simpler than the $\delta = 8$ case because $|\R(J_4,K_6;20)|$ is much smaller than $|\R(J_4,K_6;19)|$.

\begin{lemma}\label{LemmaMinDegGt7}
    Suppose $G$ is a Ramsey $(J_4,K_7;28)$ graph. Then $\delta(G) > 7$.
\end{lemma}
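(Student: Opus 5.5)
Since $7 \le \deg v \le 12$ for every vertex and Lemma \ref{LemmaMinDegLe7} gives $\delta(G) \le 7$, it suffices to rule out $\delta(G) = 7$. The plan mirrors the $\delta = 8$ computation, but with a far smaller census.

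Suppose $v$ is a vertex of degree 7. Then $G_v^-$ has $28-7-1 = 20$ vertices, so by Proposition \ref{PropDecomposition}(ii) we have $G_v^- \in \R(J_4,K_6;20)$. By Lemma \ref{LemmaEnumerations} this set has exactly 24976 graphs, which we have explicitly. By Proposition \ref{PropDecomposition}(i) and Lemma \ref{LemMatching}, $G_v^+$ is a matching with at least $7-7+1 = 1$ edge. In fact Lemma \ref{LemMatching} applied with $t=7$ forces at least one edge, so we may set $x_{1,2}$ true.

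For each $Q \in \R(J_4,K_6;20)$ we would build a formula from $F_{28}(J_4,K_7)$ with $v=0$, adding the following clauses:
\begin{itemize}
\item the unit clauses $x_{0,i}$ for $1\le i\le 7$ and $\bar x_{0,i}$ for $8 \le i \le 27$;
\item cardinality clauses forcing every vertex to have degree at least 7 (and at most 12);
\item unit literals fixing $G_v^- \cong Q$ on vertices $8,\dots,27$;
\item the matching edge $x_{1,2}$;
\item the lexicographic symmetry-breaking constraint $x_{1,8}\cdots x_{1,27} \le x_{2,8}\cdots x_{2,27}$, which swaps the endpoints of that edge.
\end{itemize}
Each formula is run in \kissat. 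To amortize preprocessing, I would group the graphs into batches of a few hundred to a thousand using the selector-variable encoding from the $\delta=8$ case, with the graphs ordered by graph6 string so that similar graphs fall in the same batch. If every batch returns UNSAT, no $(J_4,K_7;28)$ graph has a degree 7 vertex, which proves $\delta(G) > 7$.

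The main obstacle is computational. Fixing $G_v^-$ leaves only the $7\cdot 20$ edges between $N(v)$ and $G_v^-$ plus the edges inside $N(v)$ free, which should be easy, but a few graphs $Q$ may be hard. If some instances time out, I would strengthen symmetry breaking by ordering the matching edges in $G_v^+$ lexicographically, as in Lemma \ref{LemmaMinDegLe8}. I would also try exploiting $\mathrm{Aut}(Q)$, or splitting hard cases on a high-degree vertex $w$ of $Q$ so that $Q_w^-$ falls into the enumerated $\R(J_4,K_5;m)$ families.
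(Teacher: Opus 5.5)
Your proposal is correct and follows the paper's proof: a degree-7 vertex $v$ forces $G_v^-\in\R(J_4,K_6;20)$, and each of the 24976 graphs is then fixed as $G_v^-$ inside $F_{28}(J_4,K_7)$ and refuted by \kissat. Your extra matching edge $x_{1,2}$, the lexicographic clause swapping vertices $1$ and $2$, the degree cardinality clauses and the selector-variable batching are all valid but unnecessary refinements; the paper simply runs each instance on its own (a few seconds each) and remarks that the $\delta=8$ techniques could speed this up. Likewise, you do not need Lemma \ref{LemmaMinDegLe7}, because $\delta(G)\ge 7$ already reduces the claim to excluding a degree-7 vertex.
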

\begin{proof}
    Suppose $G$ has a degree 7 vertex $v$. Then $G_v^-$ is one of the 24976 graphs in  $\R(J_4,K_6;20)$. In a similar way as in previous cases, for each of these graphs $H$, we constructed a formula $\phi_H$ where the literals for the edges in $G_v^-$ are set so that $G_v^-$ is isomorphic to $H$. Each formula $\phi_H$ was determined to be unsatisfiable by \kissat. 
\end{proof}
Here each instance was easy and took only a few seconds to solve. This step was actually completed prior to the $\delta = 8$ case, and it certainly could be made faster with the techniques done there. 

Lemmas \ref{LemmaMinDegLe7} and Lemmas \ref{LemmaMinDegGt7} imply that a Ramsey $(J_4,K_7;28)$ graph does not exist. This concludes the proof of Theorem \ref{ThmMain}. 

\section{Concluding Remarks}

Our goal for this project was to prove the upper bound $R(J_4,K_7) \le 28$, but we did not compute a census of $\R(J_4,K_7;27)$, and this set contains at least 786098 graphs, according to an SMS computation. There may be more, and perhaps many more. It would be interesting to determine the exact number and explore whether any of them have special properties like the Schl\"afli complement.

The next unresolved case in Table IIIa of \cite{RamseySurvey} is $R(J_4,J_8)$. The current bounds stand at $$30 \le R(J_4,J_8) \le 32.$$ 
We believe that there is a good chance progress can be made on the upper bound using similar methods, but we leave this to future work. 

\section*{AI Declaration}
The author used some AI models for proofreading and minor code optimization. AI did not play any other role in the generation of the text or mathematical content for this paper.

\bibliographystyle{plain}

\end{document}